\documentclass[10pt,a4paper]{amsart}
\usepackage{geometry,lscape,multicol}                
\usepackage{graphicx}
\usepackage{amssymb,cleveref}
\usepackage{epstopdf, amsthm}
\DeclareGraphicsRule{.tif}{png}{.png}{`convert #1 `dirname #1`/`basename #1 .tif`.png}

\newcommand{\Gbar}{\overline{G}}
\newcommand{\Tbar}{\overline{T}}
\newcommand{\GF}{G}

\newcommand{\B}{C}

\newtheorem{dfn}{Definition}
\newtheorem{lem}{Lemma}
\newtheorem{lemma}{Lemma}
\newtheorem{theorem}{Theorem}	
\newtheorem{cor}{Corollary}	
\newtheorem{remark}{Remark}

\makeatletter
\def\imod#1{\allowbreak\mkern7mu({\operator@font mod}\,\,#1)}
\makeatother

\title{The exact number of $r$-regular elements in finite exceptional groups}
\author{Simon Guest}
\address{School of Mathematics, University of Southampton, Southampton SO17 1BJ, UK}
\email{s.d.guest@soton.ac.uk}
\date{}                                           

\begin{document}
\maketitle

\begin{abstract}
 We calculate the precise number of $r$-regular elements in the finite exceptional groups. As a corollary we find that the proportion of $r$-regular elements is at least $3577/18432$ and for all $\epsilon>0$,  there are infinitely finite simple exceptional groups such that the proportion of $r$-regular elements is less than $3577/18432 + \epsilon$ for some prime $r$.
\end{abstract}

\section{Introduction}

If $G$ is a finite group, and $r$ is a prime, then an element of $G$ is called $r$-regular if its order is not divisible by $r$, and $r$-singular otherwise. 
Obtaining bounds (in particular lower bounds) on the number of $r$-regular elements in finite classical and finite exceptional groups is important for the design and implementation of various algorithms in computational group theory.  
In \cite{BPS}, it is shown that the proportion of $r$-regular elements in a finite simple classical group is at least $1/2d$, where $d$ is the dimension of the natural module. They show also that for every family  of finite simple exceptional groups $X(q)$, there exists a constant $c(X)\ge 1/31$ such that the proportion of $r$-regular elements in $X(q)$ is at least $c(X)$. In \cite{BCGW}, the bounds for classical groups are improved using the quokka-set method developed in \cite{NiePra} (and inspired by \cite{IKS,Leh}), and it is shown that these bounds are in some sense best possible. We use the same methods here for the finite exceptional groups. Since the Weyl group $W$ is fixed for each family $X$, and the $F$-classes of $W$ and corresponding maximal tori are known for each family of exceptional groups, it is possible to obtain exact results.
\begin{theorem} \label{main}
The precise proportions of $r$-regular elements in the finite exceptional groups are given in Tables  
\ref{2F4}---\ref{2G2}. 
\end{theorem}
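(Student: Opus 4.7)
My plan is to apply the quokka-set machinery of Niemeyer--Praeger \cite{NiePra} to each family of finite exceptional groups in turn. Write $G = X(q)$ with defining characteristic $p$. When $r \neq p$, Jordan decomposition $g = su$ shows that $g$ is $r$-regular if and only if $s$ is; hence the set $Q_r$ of $r$-regular elements is a union of $G$-classes determined by its semisimple elements, i.e.\ a quokka set. When $r = p$, being $r$-regular is the same as being semisimple, and this can be treated in parallel by taking $Q_r \cap T_w = T_w$ for every maximal torus. In either case, the fundamental identity
\[
\frac{|Q_r|}{|G|} \;=\; \sum_{[w]} \frac{1}{|C_{W,F}(w)|}\cdot\frac{|T_w \cap Q_r|}{|T_w|},
\]
where the sum runs over $F$-conjugacy classes of the Weyl group $W$ of $X$ and $T_w$ is the corresponding twisted maximal torus, reduces the computation to one on each $T_w$.

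The strategy is then essentially an organised case-by-case enumeration. For each exceptional family $X$ the $F$-classes of $W$, their $F$-centralizer orders, and the orders of the corresponding twisted tori as products of cyclotomic values $\Phi_d(q)$ are all available in the literature, as indicated in the introduction. Since each $T_w$ is abelian, the ratio $|T_w \cap Q_r|/|T_w|$ is the $r'$-part of $|T_w|$ divided by $|T_w|$, so it depends only on which cyclotomic factors $\Phi_d(q)$ are divisible by $r$. Feeding these values into the formula above yields an explicit expression for the proportion of $r$-regular elements in each $X(q)$ as a sum of rational functions in $q$ and $r$; after collecting terms according to the $r$-divisibility pattern of the $\Phi_d(q)$, the result packages into the tables promised in the theorem. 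Passing from the universal group to the simple quotient is a routine adjustment, since the centre is semisimple and hence is contained in $Q_r$ whenever $r \neq p$.

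The main obstacle I anticipate is bookkeeping rather than anything conceptually deep. The sheer size of the Weyl group of $E_8$, with $112$ conjugacy classes each producing a torus whose cyclotomic factorisation must be tracked, makes this the bulk of the work, and presenting the output in compact tabular form while still covering every $r$-divisibility pattern of the relevant $\Phi_d(q)$ requires care. A secondary difficulty is the twisted types ${}^2E_6,{}^2F_4,{}^2G_2,{}^3D_4$, where the $F$-classes of $W$ and the corresponding tori must be indexed via $F$-twisted conjugation and the $\Phi_d(q)$ are replaced by their twisted analogues; but once this bookkeeping is set up correctly the same formula applies verbatim, and the verification that the minimum proportion is $3577/18432$ (attained for a particular $r$ in a particular family) reduces to checking finitely many explicit rational functions.
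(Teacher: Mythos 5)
Your core strategy is the same as the paper's: for $r$ coprime to $q$ the set $Q(r,G)$ of $r$-regular elements is a quokka set, Theorem \ref{thm:maintool} reduces the count to the maximal tori via the $F$-classes of $W$ (your $1/|C_{W,F}(w)|$ equals the paper's $|\B|/|W|$), the ratio on each torus is $1/|T|_r$ and is read off from the cyclotomic factorisation of $|T|$ together with Lemma \ref{cyclotomic}, and the rest is a finite (computer-assisted) enumeration over the known torus data for each family, with the twisted types handled by direct calculation. That is exactly how the paper proceeds, so the plan is sound in outline.

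Two of your side claims, however, are genuinely wrong. First, the case $r=p$ cannot be ``treated in parallel'' by setting $Q_r\cap T_w=T_w$: the set of $p$-regular elements is the set of semisimple elements, which fails condition (i) in the definition of a quokka set (if $g=su$ with $u\ne 1$ then $s$ is semisimple but $g$ is not), so Theorem \ref{thm:maintool} does not apply; indeed your prescription would output $\sum_{\B}|\B|/|W|=1$, which is false. The paper simply restricts to $r\nmid q$ (all tables are indexed by the multiplicative order $e$ of $q$ modulo $r$), and you should do the same. Second, your passage to the simple quotient is incorrect: a semisimple central element need not be $r$-regular (for example $Z(E_6(q)_{\mathrm{sc}})$ has order $(3,q-1)$, so for $r=3$ and $3\mid q-1$ the centre contains $3$-singular elements). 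The correct adjustment, Lemma \ref{QZ}, multiplies the proportion by $|Z(G)|_r$, which is not $1$ precisely in these cases; your argument would give the wrong proportions for the simple groups of types $E_6$, ${^2}E_6$ and $E_7$ for certain $r$.
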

\begin{cor} \label{cor:constants}
Let $X$ be a family of finite simple exceptional groups $X(q)$. The proportion of $r$-regular elements in $X(q)$ is at least $c(X)$, where $c(X)$ is given in Table \ref{constants}. Moreover, for all $\epsilon >0$, there exists infinitely many $q$ for which the proportion of $r$-regular elements in $X(q)$ is less than $c(X) + \epsilon$ for some prime $r$.
\end{cor}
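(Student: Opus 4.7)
The plan is to extract the corollary directly from the explicit formulas of Theorem~\ref{main}. For each family $X$, the theorem expresses the proportion of $r$-regular elements in $X(q)$ as a rational function of $q$ whose shape depends only on the set
\[
S_{r,q} \;=\; \{\,d \in D_X : r \mid \Phi_d(q)\,\},
\]
where $D_X$ is the finite set of cyclotomic indices appearing in the order formula of $X(q)$. Since there are only finitely many admissible subsets $S \subseteq D_X$, the proportion is one of a finite list of rational functions $\rho_{X,S}(q)$, and $c(X)$ will be the minimum over $S$ of the asymptotic values $\ell_{X,S} := \lim_{q \to \infty} \rho_{X,S}(q)$.

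First I would tabulate, for each $X$ and each admissible $S$, the rational function $\rho_{X,S}(q)$ read off from Theorem~\ref{main} and compute the limits $\ell_{X,S}$. Setting $c(X) := \min_S \ell_{X,S}$, two things remain to be checked: (i) the bound $\rho_{X,S}(q) \geq c(X)$ holds for every $q$ and every admissible $S$, and (ii) the bound is asymptotically tight. Claim (i) reduces, after clearing denominators, to a polynomial inequality in $q$ whose leading coefficients force it for large $q$, with any finitely many small values verified by direct computation.

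For (ii), let $S^*$ achieve the minimum. I would exhibit infinitely many pairs $(q,r)$ with $S_{r,q} = S^*$ by picking $q$ so that, for some distinguished $d^* \in S^*$, the cyclotomic value $\Phi_{d^*}(q)$ has a large primitive prime divisor $r$ -- which exists for all but finitely many $q$ by Zsigmondy's theorem -- and then using the near-coprimality of the $\Phi_d(q)$ to ensure that $r$ divides no $\Phi_d(q)$ for $d \in D_X \setminus \{d^*\}$. Forcing the remaining elements of $S^*$ (if any) into $S_{r,q}$ is arranged by restricting $q$ to a suitable congruence class modulo a small integer; infinitely many prime powers $q$ in any such class exist by Dirichlet's theorem.

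The hard part will be the case analysis. Although every $\rho_{X,S}(q)$ is available in closed form, several families -- notably ${}^2E_6$, $E_7$, and $E_8$ -- admit many admissible patterns $S$ whose limiting proportions $\ell_{X,S}$ are numerically close. Identifying the correct minimising $S^*$ for each family, and then verifying the polynomial inequality of step (i) uniformly in $q$ across all patterns simultaneously, is where essentially all of the bookkeeping lies.
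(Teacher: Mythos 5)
Your overall strategy --- read the proportion off the tables of Theorem~\ref{main} as a constant plus correction terms, take the minimum of the limiting constants over the finitely many cases, get the lower bound from positivity, and get tightness by forcing the relevant $\phi_{i,r}$ to be large via congruence conditions and Dirichlet --- is exactly how the corollary follows from the tables. Two of your steps are heavier than necessary: the ``polynomial inequality after clearing denominators'' in (i) is not needed, because every entry in Tables~\ref{2F4}--\ref{2G2} is already written as $\ell + \sum_j a_j/\phi_{i,r}^{\,j}$ with all $a_j \ge 0$ and $\phi_{i,r}\ge 1$, so $\rho \ge \ell$ is immediate; and Zsigmondy is the wrong tool for (ii), since the minimising rows turn out to be the $r=2$ (or, for $G_2$, $r=3$) rows, where $r$ is fixed and what must be made large is $(q\mp 1)_r$, which is purely a congruence condition on $q$.

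The genuine gap is that you work with the quantities of Theorem~\ref{main}, which are proportions in the \emph{simply connected} groups, whereas the corollary concerns the finite \emph{simple} groups $X(q)$. By Lemma~\ref{QZ} the simple-group proportion is the table entry multiplied by $|Z(G)|_r$, and this factor changes both the identity of the minimising case and the value of $c(X)$ whenever $|Z(G)|_r>1$. Concretely: for $E_7$ one has $|Z(G)|=(2,q-1)$, and the minimising row is $e=1$, $r=2$ with limiting constant $41419665/371589120\approx 0.1115$ in the simply connected group; the correct constant is twice this, $131491/589824\approx 0.2229$. For $E_6$ one has $|Z(G)|=(3,q-1)$, and the $e=1$, $r=3$ row has limiting constant $110240/1399680\approx 0.0788$, which without the factor $|Z(G)|_3=3$ would (incorrectly) beat the true minimum $281/1296\approx 0.2168$ coming from the $r=2$, $q\equiv 1\pmod 4$ row. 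So omitting Lemma~\ref{QZ} yields wrong constants for $E_6$, ${}^2E_6$ and $E_7$, and your minimisation must be carried out on $|Z(G)|_r\cdot\rho_{X,S}(q)$ rather than on $\rho_{X,S}(q)$.
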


\begin{table}
 \begin{tabular}{cc} 
\hline
$X(q)$ &  $c(X)$ \\ 
\hline
 ${^2}G_2(q)$& $1/2$ \\ 
 ${^2}B_2(q)$ & $1/2$ \\ 
 ${^3}D_4(q)$ & $17/48$ \\ 
 ${^2}F_4(q)$ & $7/16$\\  
 $G_2(q)$ & $11/36$\\ 
    $F_4(q)$ &  $3577/18432$\\ 
       $E_6(q)$ & $281/1296$ \\ 
          ${^2}E_6(q)$ & $281/1296$ \\
             $E_7(q)$ & $131491/589824$ \\ 
             $E_8(q)$ & $5927482903/25480396800$ \\ 
             \hline
 \end{tabular}
 \caption{Constants $c(X)$ for Corollary \ref{cor:constants}}
  \label{constants}
 \end{table}
It is also interesting to note that the proportion of odd order (that is, $2$-regular) elements can be bounded below by a constant.
\begin{cor}
If $q$ is odd then the proportion of odd order elements in the finite simple exceptional group $X(q)$ is at least   $3577/18432$. Moreover, for all $\epsilon >0$, there exist infinitely many $X(q)$ for which the proportion of odd order elements is less than $3577/18432 + \epsilon$.
\end{cor}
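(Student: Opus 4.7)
The plan is to derive this corollary directly from Corollary \ref{cor:constants} together with inspection of Table \ref{constants} and of the exceptional-group tables referenced in Theorem \ref{main}. For the lower bound, I would first observe that by Corollary \ref{cor:constants}, applied with the prime $r=2$, the proportion of $2$-regular (equivalently odd-order) elements in $X(q)$ is at least $c(X)$ for every family $X$. When $q$ is odd, the Suzuki-Ree families ${}^2B_2(q)$ and ${}^2F_4(q)$ do not occur, so only the eight remaining families in Table \ref{constants} need to be considered. Inspection of the table shows that $\min_X c(X) = c(F_4) = 3577/18432$, giving the asserted lower bound.

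For the sharpness statement, I would apply the second half of Corollary \ref{cor:constants} to the family $F_4$: for each $\epsilon>0$ there exist infinitely many $q$ and some prime $r=r(q)$ with the proportion of $r$-regular elements of $F_4(q)$ strictly below $3577/18432 + \epsilon$. The substantive task is to refine this so that $r$ may be chosen equal to $2$ and $q$ may be chosen odd. To do this I would return to the explicit formula for the proportion of $r$-regular elements in $F_4(q)$ in the relevant table from Theorem \ref{main}: this formula is a piecewise expression in $q$ and $r$, depending on the cyclotomic divisors $\Phi_i(q)$ that appear in $|F_4(q)|$, and hence only on the residue of $q$ modulo some fixed modulus $N$. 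I would identify the congruence class $q \equiv q_0 \pmod N$ with $q_0$ odd, and the prime $r=2$, along which the formula tends to $3577/18432$ as $q \to \infty$; Dirichlet's theorem then supplies infinitely many prime powers $q$ of the desired form.

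The main obstacle is precisely this last verification: one must check from the $F_4$ table that the infimum $3577/18432$ is in fact attained in the limit along an infinite subsequence of odd prime powers $q$ with the choice $r=2$, rather than only along even $q$ or only for some odd prime $r$. This is a finite (if tedious) bookkeeping exercise: compute the limiting value of the $2$-regular proportion for each residue class of $q$ modulo $N$, and pick out an odd class for which the limit equals $3577/18432$. Once this arithmetic check is in place, both assertions of the corollary follow at once.
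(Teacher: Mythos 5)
Your approach is the intended one: the paper gives no separate proof of this corollary, treating it as immediate from Corollary \ref{cor:constants} with $r=2$ (note $q$ odd guarantees $2\nmid q$, so the tables apply), the minimum $3577/18432$ in Table \ref{constants}, and the $e=1$, $r=2$ rows of Table \ref{F4}. One correction to your sharpness step: for $r=2$ the quantity $\phi_{1,2}=(q-1)_2$ is unbounded and is not determined by the residue of $q$ modulo any \emph{fixed} $N$, so there is no single congruence class along which the proportion tends to $3577/18432$ as $q\to\infty$; instead, given $\epsilon>0$ choose $k=k(\epsilon)$ so that the first entry of Table \ref{F4} lies below $3577/18432+\epsilon$ whenever $(q-1)_2\ge 2^{k}$, and then take the infinitely many primes $q\equiv 1\pmod{2^{k}}$ supplied by Dirichlet (since $|Z(F_4(q))|=1$, Lemma \ref{QZ} introduces no correction for the simple group).
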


\section{Preliminaries}
\label{prelim}
In this section we give some useful technical results coming from algebraic group
theory and number theory. First we set up some notation. 
Let $\Gbar$ be a connected reductive algebraic group defined over $\bar{\mathbb{F}}_q$, the algebraic closure of a field $\mathbb{F}_q$ of order $q$.  Let $F$ be a Frobenius morphism of $\Gbar$, and let $G=\Gbar^F$ be the subgroup of $\Gbar$ fixed elementwise by $F$, so that $G$ is a finite group of Lie type. Let $\Tbar$ be an $F$-stable maximal torus in $\Gbar$ and let $W:=N_{\Gbar}(\Tbar)/\Tbar$ denote the Weyl group of $\Gbar$. We will say that two elements $w, w^{\prime}$ in $W$ are $F$-conjugate if there exists $x$ in $W$  such that $w^{\prime}= x^{-1}wF(x)$. This is an equivalence relation, and we will refer to the equivalence classes as \textit{$F$-classes}. Moreover, there is an explicit one-to-one correspondence between the $F$-classes of $W$ and the $\GF$-conjugacy classes of maximal tori in $\GF$.  

\subsection{The quokka set method}
\label{quokka}
Now recall that every element $g$ in $\GF$ can be expressed uniquely in the form $g=su$, where $s \in \GF$ is semisimple, $u \in \GF$ is unipotent and $su=us$. This is the multiplicative Jordan decomposition of $g$ (see \cite[p. 11]{Carter2}). We now define a quokka set as a subset of a finite group of Lie type that satisfies certain closure properties.

\begin{dfn}
{\rm Suppose that $\GF$ is a finite group of Lie type. A nonempty subset $Q$ of $\GF$ is called a \textit{quokka set}, or quokka subset of $\GF$,  if the following two conditions hold.
\begin{enumerate}
 \item[(i)] For each $g \in \GF$ with Jordan decomposition $g=su=us$, where $s$ is the semisimple part of $g$ and $u$ the unipotent part of $g$, the element $g$ is contained in $Q$ if and only if $s$ is contained in $Q$; and
\item [(ii)] the set $Q$ is a union of $\GF$-conjugacy classes.
\end{enumerate}}
\end{dfn}
For $\GF$ a finite group, and a prime $r$ not dividing $q$, define the subset
\begin{displaymath}
Q(r,\GF) := \{ g \in \GF \,: \, r\,\nmid |g| \},
\end{displaymath}
consisting of all the $r$-regular elements $g$ in $\GF$.  We see readily that $Q(r,\GF)$ is a quokka set when $G$ is a finite exceptional group. We now quote \cite[Theorem 1.3]{NiePra}, which will be our main tool.
\begin{theorem} \label{thm:maintool}
Let $\GF$, $W$, $Q(r,\GF)$ be as above. For each $F$-class $\B$ in $W$, let $T_{\B}$ denote a maximal torus corresponding to $\B$.
Then
\begin{align} \label{eqn:maintool1}
\frac{|Q(r,\GF)|}{|\GF|} &= \sum_{F-\mathrm{classes }\;\B \;\mathrm{ in } \; W}\frac{|\B|}{|W|}.\frac{|T_{\B} \cap Q(r,\GF)|}{|T_{\B}|}.
\end{align}
\end{theorem}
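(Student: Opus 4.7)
The approach combines the Jordan decomposition with a classical identity of Steinberg that relates unipotent elements to $F$-stable maximal tori. First, every $g \in \GF$ decomposes uniquely as $g=su=us$ with $s$ semisimple and $u$ unipotent, and axiom (i) in the quokka set definition gives $g \in Q := Q(r,\GF)$ if and only if $s \in Q$. Since the unipotent elements occurring with a fixed semisimple $s$ are exactly the unipotent elements of $C_{\GF}(s)$, we obtain
\begin{equation*}
|Q| = \sum_{\substack{s \in Q \\ s\text{ semisimple}}} u(C_{\GF}(s)),
\end{equation*}
where $u(H)$ denotes the number of unipotent elements of a finite group $H$.

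Next I would invoke Steinberg's theorem: for a connected reductive algebraic group $\bar H$ over $\bar{\mathbb F}_q$ with Frobenius $F$, the number $u(\bar H^F)$ equals the number of $F$-stable maximal tori of $\bar H$. I apply this to the centralizer $\bar H := C_{\Gbar}(s)^\circ$ (connected reductive by a theorem of Springer--Steinberg), and combine it with three standard facts: (a) unipotent elements of any algebraic group lie in its identity component, so $u(C_{\GF}(s)) = u(\bar H^F)$; (b) $s$ is central in $\bar H$, hence lies in every maximal torus of $\bar H$; and (c) the maximal tori of $\bar H$ coincide with those maximal tori of $\Gbar$ that contain $s$ (both having the same rank as $\Gbar$). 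Together these yield
\begin{equation*}
u(C_{\GF}(s)) = n(s), \qquad n(s) := \#\{F\text{-stable maximal tori of }\Gbar\text{ containing } s\}.
\end{equation*}

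Finally, I would double-count the pairs $(T, s)$ consisting of an $F$-stable maximal torus $T$ of $\Gbar$ and a semisimple element $s \in T^F \cap Q$:
\begin{equation*}
\sum_{\substack{s \in Q \\ s\text{ semisimple}}} n(s) = \sum_{T} |T^F \cap Q|,
\end{equation*}
the outer right-hand sum ranging over all $F$-stable maximal tori of $\Gbar$. By axiom (ii), $Q$ is a union of $\GF$-conjugacy classes, so $|T^F \cap Q|$ is constant on each $\GF$-conjugacy class of tori. Grouping by $F$-class $\B$ in $W$ via the stated bijection, and using $|N_{\GF}(T_\B)/T_\B| = |C_W(F,w)| = |W|/|\B|$ for a representative $w$ of $\B$, each class contributes $|\GF| \cdot |\B|/(|W| \cdot |T_\B|)$ conjugates of $T_\B$. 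Assembling these identities and dividing by $|\GF|$ gives the formula.

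The main obstacle is Steinberg's identity $u(\bar H^F) = \#\{F\text{-stable maximal tori of } \bar H\}$: despite its compact statement, its justification uses the Lang--Steinberg theorem together with the order formula for finite groups of Lie type. Everything else --- including the bijection between $F$-classes of $W$ and $\GF$-conjugacy classes of maximal tori --- is either supplied by hypothesis or is elementary combinatorial bookkeeping.
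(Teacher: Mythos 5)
The paper contains no proof of this statement to compare against: Theorem~\ref{thm:maintool} is simply quoted from \cite[Theorem 1.3]{NiePra}, so any comparison is really with the proof in that reference. Your argument is correct and is, in substance, the standard proof of the cited result (the line of argument of Niemeyer--Praeger, going back to Isaacs--Kantor--Spaltenstein and Lehrer): quokka axiom (i) gives $|Q|=\sum_{s} u(C_{\GF}(s))$ over semisimple $s\in Q$; Steinberg's two counts (the number of unipotent elements of $\bar H^F$ and the number of $F$-stable maximal tori of $\bar H$ both equal $|\bar H^F|_p^2$) convert $u(C_{\GF}(s))$ into the number $n(s)$ of $F$-stable maximal tori through $s$; the double count $\sum_s n(s)=\sum_T|T^F\cap Q|$ together with axiom (ii) and $|N_{\GF}(T)/T^F|=|W|/|\B|$ then yields \eqref{eqn:maintool1}. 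The one point you should restate more carefully is (a): it is \emph{not} true that unipotent elements of an arbitrary algebraic group lie in its identity component --- in characteristic $p$ a finite constant group of order $p$ embeds in $\GL_2$ as a group of unipotent matrices, so a disconnected group can have unipotent elements in every component. What you actually need, and what is true, is the Springer--Steinberg fact specific to centralizers of semisimple elements in a connected reductive group: every unipotent $u\in C_{\Gbar}(s)$ lies in $C_{\Gbar}(s)^{\circ}$ (for instance because $su$ lies in some Borel $B=T\ltimes U$ with $s\in T$, $u\in C_U(s)$, and $C_U(s)$ is a connected product of root subgroups). With that citation corrected, your proof is complete, and it even establishes the result for arbitrary quokka sets rather than just $Q(r,\GF)$, exactly as in the source the paper quotes.
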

%

For an integer $k$ and a prime $r$, we denote by $k_r$ the $r$-part of $k$ (that is, the largest power of $r$ that divides $k$).

\begin{lem} \label{QZ}
Let $G$ be a finite simply connected exceptional group with centre $Z$. Then the proportion of $r$-regular elements in the finite simple exceptional group $G/Z(G)$ is $|Q(r,G)||Z|_r /|G|$.
\end{lem}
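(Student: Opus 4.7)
The plan is to transport $r$-regularity through the quotient $\pi\colon G\to G/Z$ by means of the canonical $r$-part / $r'$-part decomposition. For every $g\in G$ there are unique commuting elements $g_r,g_{r'}\in\langle g\rangle$ with $g=g_rg_{r'}$, where $g_r$ has $r$-power order and $g_{r'}$ has order coprime to $r$. Because $\pi$ is a homomorphism, $\pi(g_r)$ and $\pi(g_{r'})$ are commuting elements of $G/Z$ whose orders divide those of $g_r$ and $g_{r'}$, so by uniqueness they realise the $r/r'$-decomposition of $\pi(g)$. Consequently $gZ$ is $r$-regular if and only if $g_r\in Z$.

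Let $Z_r$ denote the unique Sylow $r$-subgroup of the abelian group $Z$, so $|Z_r|=|Z|_r$. Since $g_r$ is an $r$-element, the condition $g_r\in Z$ is equivalent to $g_r\in Z_r$. For each fixed $z\in Z_r$, I claim the map $h\mapsto zh$ is a bijection from $Q(r,G)$ onto the fibre $\{g\in G:g_r=z\}$. Indeed, $z$ is central of $r$-power order, so $zh=hz$ is a commuting product of an $r$-element and an $r'$-element, and uniqueness of the decomposition forces $(zh)_r=z$ and $(zh)_{r'}=h$; the inverse sends $g$ to $g_{r'}$. Summing over $z\in Z_r$ yields
\begin{equation*}
|\{g\in G:g_r\in Z\}|\;=\;|Z_r|\cdot|Q(r,G)|\;=\;|Z|_r\cdot|Q(r,G)|.
\end{equation*}

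This set is a union of $Z$-cosets (replacing $g$ by $gz'$ for $z'\in Z$ changes $g_r$ by $z'_r\in Z_r\subseteq Z$), so dividing by $|Z|$ counts the $r$-regular cosets in $G/Z$, and a further division by $|G/Z|=|G|/|Z|$ gives the proportion $|Z|_r|Q(r,G)|/|G|$. No real obstacle is anticipated: the argument is formal and uses neither the simply-connected assumption nor the exceptional type, beyond ensuring that $G/Z(G)$ is the intended simple group whose $r$-regular proportion is being measured. The only point requiring care is the functoriality of the $r/r'$-decomposition under $\pi$, which is immediate from its uniqueness.
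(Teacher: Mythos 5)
Your argument is correct. The key steps all hold up: the $r$-part/$r'$-part decomposition is indeed functorial under the quotient map (since $\pi(g_r)$ and $\pi(g_{r'})$ are commuting elements of $\langle\pi(g)\rangle$ of $r$-power and $r'$-order respectively, uniqueness forces $\pi(g)_r=\pi(g_r)$); the fibre count via $h\mapsto zh$ is valid because a commuting product of an $r$-element and an $r'$-element is automatically its own $r/r'$-decomposition; and the final division by $|Z|$ and $|G/Z|$ gives exactly the stated proportion. The paper, by contrast, does not argue this at all: its entire proof is the single line that the lemma is ``an easy consequence of [Lemma 2.3]{BCGW}'', deferring the counting to that external result. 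So you have supplied a complete, elementary, self-contained proof where the paper offers only a citation. What your route buys is independence from the companion paper and transparency about exactly what is used --- as you note, neither simplicity of $G/Z$ nor the exceptional type plays any role, only that $Z$ is a central subgroup of a finite group; what the paper's route buys is brevity and consistency with the framework of \cite{BCGW}, where the analogous statement is needed for the classical groups. One could quibble that you should remark explicitly that $Z=Z(G)$ is abelian so that $Z_r$ exists and is characteristic, and that $G/Z(G)$ really is the finite simple group in question for the simply connected exceptional groups, but these are cosmetic; the proof is sound.
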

\begin{proof} 
  This is an easy consequence of \cite[Lemma 2.3]{BCGW}.
\end{proof}

\subsection{Cyclotomic polynomials}
\label{notheory}
Recall that the $i$th cyclotomic polynomial $\Phi_i(q)$ is defined to be the 
the unique irreducible polynomial with integer coefficients that divides $q^i-1$ and  does not a divide $q^j-1$ for any $j < i$. The $\Phi_i(q)$ satisfy the equation
\begin{equation} \label{cyclodef}
q^{i}-1 = \prod_{j | i} \Phi_j(q).
\end{equation}
The orders of maximal tori in the (simply connected) untwisted and Steinberg exceptional groups are products of cyclotomic polynomials (see \cite[p. 222]{MT} for example).  It is easy to see that the proportion of $r$-regular elements in a maximal torus $T$ is $1/|T|_r$. In view of \eqref{eqn:maintool1}, the following elementary lemma on $r$-parts of cyclotomic polynomials will therefore be useful.

\begin{lemma} \label{cyclotomic}
Let $r$ be a prime and $q$ a prime power with $(r,q)=1$. Let $e$ be the smallest positive integer such that $r | q^{e}-1$. In particular, $e$ is the multiplicative order of $q$ modulo $r$. Let $\phi_{i,r}$ be the $r$-part of the $i$th cyclotomic polynomial $\Phi_i(q)$.  If $r\ge 3$, then 
\begin{align*} 
\phi_{i,r}  = (\Phi_i(q))_r = \begin{cases}
 (q^{e}-1)_r & \text{ if $i=e$;}\\
 r & \text{ if $i=er^{f}$ and $f \ge 1$;}\\
  1& \text{ otherwise.}
 \end{cases}
\end{align*}
If $r=2$ we have 
\begin{align*} 
\phi_{i,2}= (\Phi_i(q))_2 = \begin{cases}
 (q-1)_2 & \text{ if $i=1$;}\\
  (q+1)_2 & \text{ if $i=2$;}\\
 2 & \text{ if $i=2^{f}$ and $f \ge 2$;}\\
  1& \text{ otherwise.}
 \end{cases}
\end{align*}
\begin{remark} 
 \emph{Fermat's little theorem implies that $r-1= ke$ for some integer $k$. In particular $r=ke+1 \ge e+1$.} 
\end{remark}

\end{lemma}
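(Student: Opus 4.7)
The plan is to apply Möbius inversion to the identity $q^n - 1 = \prod_{d \mid n} \Phi_d(q)$, giving $\Phi_i(q) = \prod_{d \mid i} (q^d - 1)^{\mu(i/d)}$ and hence
\[
v_r(\Phi_i(q)) = \sum_{d \mid i} \mu(i/d)\, v_r(q^d - 1).
\]
The problem then reduces to computing $v_r(q^d - 1)$ for each divisor $d$ of $i$, for which the main tool is the Lifting-the-Exponent (LTE) lemma.

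For odd $r$, LTE gives $v_r(q^d - 1) = v_r(q^e - 1) + v_r(d/e)$ whenever $e \mid d$, and $v_r(q^d - 1) = 0$ otherwise. If $e \nmid i$, no divisor of $i$ is divisible by $e$, so $\phi_{i,r} = 1$. Otherwise write $i = ek$; parametrising the relevant divisors as $d = ek'$ with $k' \mid k$, the sum splits as
\[
v_r(\Phi_i(q)) = v_r(q^e - 1)\sum_{k' \mid k}\mu(k/k') + \sum_{k' \mid k}\mu(k/k')\, v_r(k').
\]
The first sum equals $1$ only when $k = 1$, accounting for the $i = e$ case. The second is the Dirichlet convolution $(\mu * v_r)(k)$; since $v_r = \mathbf{1} * g$ where $g$ is the indicator of positive powers of $r$, Möbius inversion gives $(\mu * v_r)(k) = g(k)$, which equals $1$ exactly when $k = r^f$ for some $f \ge 1$, yielding $\phi_{i,r} = r$ in that case and $\phi_{i,r} = 1$ otherwise.

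For $r = 2$ the cases $i = 1, 2$ follow directly from $\Phi_1(q) = q - 1$ and $\Phi_2(q) = q + 1$. For $i > 2$ I would use the bifurcated form of LTE: $v_2(q^d - 1) = v_2(q - 1)$ when $d$ is odd, and $v_2(q^d - 1) = v_2(q - 1) + v_2(q + 1) + v_2(d) - 1$ when $d$ is even. If $i > 1$ is odd, every divisor is odd and the sum collapses to $v_2(q - 1)\sum_{d \mid i}\mu(i/d) = 0$. If $i = 2m$ with $m > 1$ odd, splitting divisors into odd ones and even ones of the form $2d'$, using the identity $\mu(2m/d') = -\mu(m/d')$ and the vanishing $\sum_{d' \mid m}\mu(m/d') = 0$, both partial sums vanish. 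Finally, for $i = 2^a m$ with $a \ge 2$, the only divisors $d$ with $\mu(i/d) \ne 0$ are $d = i$ and $d = i/2$; both exponents are even, LTE applies, and the contribution reduces via the same $\sum_{d' \mid m}\mu(m/d')$ identity to $\phi_{i,2} = 2$ when $m = 1$ and $\phi_{i,2} = 1$ otherwise. The main technical subtlety lies in this $r = 2$ bookkeeping, where the $v_2(q \pm 1)$ contributions must cancel outside the three boundary cases.
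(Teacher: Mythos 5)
Your proposal is correct and is essentially the paper's own argument made explicit: the paper simply cites the identity $q^i-1=\prod_{j\mid i}\Phi_j(q)$ together with the known formula for $(q^i-1)_r$ (the lifting-the-exponent formula, \cite[Lemma 2.5]{BCGW}), and your M\"obius inversion plus Dirichlet-convolution bookkeeping is exactly the deduction the paper leaves to the reader. One small slip: for $i=2^a m$ with $a\ge 2$ and $m>1$ the divisors $d$ with $\mu(i/d)\ne 0$ are all those of the form $2^{a-1}d'$ and $2^a d'$ with $d'\mid m$ and $m/d'$ squarefree, not just $i$ and $i/2$; your subsequent sum over $d'\mid m$ shows you in fact treat the full set, so the conclusion stands.
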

\begin{proof} 
This follows from \eqref{cyclodef} and the formula for the $r$-part of $q^{i}-1$ given, for example, in \cite[Lemma 2.5]{BCGW}. 
\end{proof}

\section{Proof of Theorem \ref{main}}
Recall that $e$ is the multiplicative order of $q$ modulo $r$. We denote by $\phi_{i,r}$ the $r$-part of the $i$th cyclotomic polynomial $\Phi_i(q)$ as in Lemma \ref{cyclotomic}.
The correspondence between the classes  of maximal tori and the $F$-classes of the Weyl group is well understood for the finite exceptional groups. See \cite{KSprime,Shinoda2F4,3D4classes,shoji,ShinodaF4even,MR1102992,MR898346,FleiE6E7}. For the untwisted groups, the torus orders and corresponding $F$-class representatives are obtained in \textsc{Magma} \cite{MAGMA} (using the command
\verb TwistedTorusOrders ), and the results are obtained by computer. The torus orders for ${^2}E_6(q)$ are obtained from those of $E_6(q)$ by replacing $q$ by $-q$ and the $F$-class representatives in \cite[p.98--99]{FleiE6E7} can be taken to be the same (see also \cite[Proposition 25.3]{MT}). The results for the other twisted groups can be obtained by direct calculation. We provide a detailed example in the  case $G = {^2}F_4(q)$ with $q=2^{f}$ and $f$ odd.\\
In this case, there are $11$ classes of maximal tori $T_i$. The orders $|T_i|$, together with the sizes $|C_i|$ of the corresponding $F$-classes, are given in \cite[p. 8]{Shinoda2F4}. We list them here for the convenience of the reader:
\begin{small}
\begin{multicols}{2} 
\begin{enumerate}
\item[(1)]  $|T_{1}| = (q-1)^{2}$;  $\frac{|C_1|}{|W|} = 1/16$;  
\item[(2)]  $|T_{2}| = q^{2}-1 $; $\frac{|C_2|}{|W|} = 1/4 $;
\item[(3)]  $|T_{3}| = (q-1)(q- \sqrt{2q}+1)$;  $\frac{|C_3|}{|W|} = 1/8$; 
\item[(4)]  $|T_{4}| = (q-1)(q+ \sqrt{2q}+1)$;  $\frac{|C_4|}{|W|} = 1/8$; 
\item[(5)]  $|T_{5}| = q^{2}+1 $; $\frac{|C_5|}{|W|} =1/16 $;  
\item[(6)]  $|T_{6}| = (q- \sqrt{2q}+1)^2$;  $\frac{|C_6|}{|W|} = 1/96$;  
\item[(7)]  $|T_{7}| = (q+ \sqrt{2q}+1)^2$;  $\frac{|C_7|}{|W|} =1/96 $;  
\item[(8)]  $|T_{8}| = (q+1)^{2}$;  $\frac{|C_8|}{|W|} = 1/48$; 
\item[(9)]  $|T_{9}| = q^{2}-q+1$;  $\frac{|C_9|}{|W|} = 1/6$; 
\item[(10)]  $|T_{10}| = q^{2}\!-\! \sqrt{2q^{3}} \!+\!q \!-\! \sqrt{2q}\!+\!1$;  $\frac{|C_{10}|}{|W|} = 1/12$; 
\item[(11)]  $|T_{11}| = q^{2}\!+\! \sqrt{2q^{3}} \!+\!q+\! \sqrt{2q}\!+\!1$;  $\frac{|C_{11}|}{|W|} = 1/12$.
\end{enumerate}
\end{multicols}
\end{small}
If $e=1$, then $r$ divides $q-1 = \Phi_1(q)$ and since $r \nmid q$, we have $r \ge 3$. In particular $r$ does not divide $q+1= \Phi_2(q)$. Moreover, we note that $$(q- \sqrt{2q}+1)(q+ \sqrt{2q}+1) = q^{2}+1 = \Phi_4(q)$$ so $r$ does not divide $(q\pm \sqrt{2q}+1)$ either by Lemma \ref{cyclotomic}. Furthermore, 
$$(q^{2}\!+\! \sqrt{2q^{3}} \!+\!q+\! \sqrt{2q}\!+\!1)(q^{2}\!-\! \sqrt{2q^{3}} \!+\!q \!-\! \sqrt{2q}\!+\!1) = q^{4}-q^{2}+1 = \Phi_{12}(q)$$ so $r$ does not divide $|T_{10}|$ or $|T_{11}|$. Thus we have the proportion $|Q \cap T_i|/|T_i|$ of $r$-regular elements in $T_i$ is  $1/(q-1)^2$ for $i=1$; $1/(q-1)$ for $i=2,3,4$ and $1$ for $i=5, \ldots,11$. Applying Theorem \ref{thm:maintool} implies that if $e=1$, then the proportion of $r$-regular elements in $ G={^2}F_4(q)$ is 
\begin{align*} 
  \frac{|Q|}{|G|}  &= \sum_{i=1}^{11}  \frac{|\B_i|}{|W|}.\frac{|T_{i} \cap Q|}{|T_{i}|} = \frac{1}{16(q-1)_r^{2}} \!+\! \frac{1}{(q-1)_r} \left(\frac{1}{4} + \frac{1}{8}+ \frac{1}{8}\right) +  \!\frac{1}{16}\!+\! \frac{1}{96}\! +\! \frac{1}{96}\!+\!\frac{1}{48} \!+\! \frac{1}{6} \!+\! \frac{1}{12}\!+\!\frac{1}{12} \\
  &= \frac{7}{16}+ \frac{1}{2(q-1)_r} + \frac{1}{16(q-1)_r^{2}},
\end{align*} 
which appears in line 1 of Table \ref{2F4}. If $e=2$ then $r \nmid q-1$, $\Phi_{12}(q)$ or $\Phi_4(q)$,  we have $(q^{2}-1)_r = (q+1)_r$ and $(\Phi_6(q))_r= (3,r)$. Thus $|Q \cap T_i|/|T_i| =1$ for $i=1,3,4,5,6,7,10,11$;  $1/(q+1)_r$ for $i=2$;  $1/(q+1)_r^{2}$ for $i=8$ and  $1/(3,r)$ for $i=9$. 
Applying Theorem \ref{thm:maintool} again implies that 
\begin{align*} 
  \frac{|Q|}{|G|}  &= \sum_{i=1}^{11}  \frac{|\B_i|}{|W|}.\frac{|T_{i} \cap Q|}{|T_{i}|} = 
   \!\frac{1}{16}\!+\! \frac{1}{8}\! +\! \frac{1}{8}\!+\!\frac{1}{16} \!+\! \frac{1}{96} \!+\! \frac{1}{96}\!+\!\frac{1}{12}+\!\frac{1}{12}+ 
  \frac{1}{4(q+1)_r} \!+\! \frac{1}{48(q+1)_r^{2}}+ \frac{1}{6(3,r)} \\
  &= \frac{9}{16}+ \frac{1}{6(3,r)} + \frac{1}{4(q+1)_r} + \frac{1}{48(q+1)_r^{2}},
\end{align*} 
which gives lines 2 and 3 of Table \ref{2F4}. The other cases are similar and we omit the details.
 
\section{Tables for the precise proportions in the simply connected finite exceptional groups}

We note that by Lemma \ref{QZ}, the proportion of $r$-regular elements in the finite \emph{simple} exceptional groups is obtained by multiplying by $|Z(G)|_r$.

\begin{table} 
\begin{tabular}{ll}
\hline
 $e$ & $|Q|/| {^2}F_4(q)|$ \\
 \hline
 $1$ & $7/16+ 1/2\phi_{1,r} + 1/16\phi_{1,r}^2$ \\
 $2 (r=3)$ & $89/144+ 1/4\phi_{2,r} +1/48\phi_{2,r}^{2}$\\
  $2 (r>3)$ & $35/48+1/4\phi_{2,r} +1/48\phi_{2,r}^{2}$\\
  $4$ & $77/96+3/16\phi_{4,r} + 1/96\phi_{4,r}^{2}$\\
  $6$ & $5/6 + 1/6\phi_{6,r}$ \\
  $12$ & $11/12 + 1/12\phi_{12,r}$\\
 \hline 
\end{tabular}
\caption{Proportions of $r$-regular elements in  $G= {^2}F_4(q)$ where $q$ has multiplicative order $e$ modulo $r$ 
 ($|Z(G)| = 1$)}
\label{2F4}
\end{table}


\begin{table} 
\begin{tabular}{ll}
\hline
 $e$ & $|Q|/|{^2}B_2(q)|$ \\
 \hline
 $1$ & $1/2+1/2\phi_{1,r}$ \\
 $4$ & $3/4 + 1/4\phi_{4,r}$\\
 \hline 
 \end{tabular}\caption{Proportions of $r$-regular elements in  $G= {^2}B_2(q)$ where $q$ has multiplicative order $e$ modulo $r$ ($|Z(G)| = 1$) } \label{2B2}
 \end{table} 

\begin{table} 
\begin{tabular}{ll}
\hline
 $e$ & $|Q|/|{^3}D_4(q)|$ \\
 \hline
 $1$ ($r=2, q \equiv 1 \pmod{4}$) &$(17\phi_{1,r}^2 + 12\phi_{1,r} + 4)/48\phi_{1,r}^2$\\
$1$ ($r=2, q \equiv 3 \pmod{4}$) &$(17\phi_{2,r}^2 + 12\phi_{2,r} + 4)/48\phi_{2,r}^2$\\
$1$ $(r=3)$ & $(82\phi_{1,r}^2 + 72\phi_{1,r}  + 
6)/216\phi_{1,r}^2$ \\ 
$1$ ($r>3$) & $(10\phi_{1,r}^2 + 12\phi_{1,r} + 
2)/24\phi_{1,r}^2$ \\ 
$2$ ($r=3$) & $(82\phi_{2,r}^2 + 72\phi_{2,r} + 
6)/216\phi_{2,r}^2$ \\ 
$2$ ($r>3$) & $(10\phi_{2,r}^2 + 12\phi_{2,r} + 
2)/24\phi_{2,r}^2$ \\ 
$3$ & $(15\phi_{3,r}^2 + 8\phi_{3,r} + 1)/24\phi_{3,r}^2$ \\ 
$6$ & $(15\phi_{6,r}^2 + 8\phi_{6,r} + 1)/24\phi_{6,r}^2$ \\ 
$12$ & $(3\phi_{12,r} + 1)/4\phi_{12,r}$ \\ 
\hline 
\end{tabular}
\caption{Proportions of $r$-regular elements in  $G={^3}D_4(q)$ where $q$ has multiplicative order $e$ modulo $r$  ($|Z(G)| = 1$)}
\label{3D4}
\end{table}

\begin{table} 
\begin{tabular}{ll}
\hline
$e$ & $|Q|/|F_4(q)|$ \\
\hline
$1 (r=2, q \equiv 1 \pmod{4})$ & $(3577\phi_{1,r}^4 + 3696\phi_{1,r}^3 + 1672\phi_{1,r}^2 + 192\phi_{1,r} + 16)/18432\phi_{1,r}^4$ \\ 
 $1 (r=2, q \equiv 3 \pmod{4})$ & $(3577\phi_{2,r}^4 + 3696\phi_{2,r}^3 + 1672\phi_{2,r}^2 + 192\phi_{2,r} + 16)/18432\phi_{2,r}^4$ \\
$1 (r=3)$ & $(3337\phi_{1,r}^4 + 3816\phi_{1,r}^3 + 1326\phi_{1,r}^2 + 216\phi_{1,r}
+ 9)/10368\phi_{1,r}^4$ \\  
        $1 (r>3)$ & $(385\phi_{1,r}^4 + 552\phi_{1,r}^3 + 190\phi_{1,r}^2 + 
24\phi_{1,r} + 1)/1152\phi_{1,r}^4$ \\ 
$2 (r=3)$ & $(3337\phi_{2,r}^4 + 3816\phi_{2,r}^3 + 1326\phi_{2,r}^2 + 216\phi_{2,r}
+ 9)/10368\phi_{2,r}^4$ \\  
        $2 (r>3)$ & $(385\phi_{2,r}^4 + 552\phi_{2,r}^3 + 190\phi_{2,r}^2 + 
24\phi_{2,r} + 1)/1152\phi_{2,r}^4$ \\ 
$3$ & $(55\phi_{3,r}^2 + 16\phi_{3,r} + 1)/ 72\phi_{3,r}^2$ \\ 
$4$ & $(77\phi_{4,r}^2 + 18\phi_{4,r} + 1)/ 96\phi_{4,r}^2$ \\ 
$6$ & $(55\phi_{6,r}^2 + 16\phi_{6,r} + 1)/ 72\phi_{6,r}^2$ \\ 
$8$ & $(7\phi_{8,r} + 1)/ 8\phi_{8,r}$ \\ 
$12$ & $(11\phi_{12,r} + 1)/ 12\phi_{12,r}$ \\ 
\hline
\end{tabular}
\caption{Proportions of $r$-regular elements in $G=F_4(q)$ where $q$ has multiplicative order $e$ modulo $r$  ($|Z(G)| = 1$)}
\label{F4}
\end{table}

\begin{table}
\begin{tabular}{ll}
\hline
$e$ & $|Q|/|G_2(q)|$ \\
\hline
$1 (r=2, q \equiv 1 \pmod{4})$ & $(17\phi_{1,r}^2 + 12\phi_{1,r} + 4)/48\phi_{1,r}^2$ \\ 
$1 (r=2, q \equiv 3 \pmod{4})$ & $(17\phi_{2,r}^2 + 12\phi_{2,r} + 4)/48\phi_{2,r}^2$ \\
$1 (r=3)$ & $(9\phi_{1,r}^2 + 2\phi_{1,r}^2 + 18\phi_{1,r} + 3)/ 36\phi_{1,r}^2$
\\ 
$1 (r>3)$ & $(3\phi_{1,r}^2 + 2\phi_{1,r}^2 + 6\phi_{1,r} + 1)/ 12\phi_{1,r}^2$
\\ 
$2 (r=3)$ & $(9\phi_{2,r}^2 + 2\phi_{2,r}^2 + 18\phi_{2,r} + 3)/ 36\phi_{2,r}^2$
\\ 
$2 (r >3)$ & $(3\phi_{2,r}^2 + 2\phi_{2,r}^2 + 6\phi_{2,r} + 1)/ 12\phi_{2,r}^2$
\\ 
$3$ & $(5\phi_{3,r} + 1)/ 6\phi_{3,r}$ \\ 
$6$ & $(5\phi_{6,r} + 1)/ 6\phi_{6,r}$ \\
\hline 
\end{tabular}
\caption{Proportions of $r$-regular elements in $G=G_2(q)$ where $q$ has multiplicative order $e$ modulo $r$  ($|Z(G)| = 1$) } \label{G2}
\end{table}

\begin{landscape}
 \begin{table} 
\begin{tabular}{ll}
\hline
$e$ & $|Q|/|E_6(q)|$ \\
\hline
$1$ ($r=2, q \equiv 1 \pmod{4}$) & $(179840\phi_{1,r}^6 + 131292\phi_{1,r}^5 + 113709\phi_{1,r}^4 + 
19080\phi_{1,r}^3 + 4920\phi_{1,r}^2 + 288\phi_{1,r} + 16)/829440\phi_{1,r}^6$ \\ 
$1$ ($r=2, q \equiv 3 \pmod{4}$) & $(17557\phi_{2,r}^4 + 12024\phi_{2,r}^3 + 3928\phi_{2,r}^2 + 288\phi_{2,r} + 
16)/73728\phi_{2,r}^4$ \\ 
$1 (r=3)$ & $(110240\phi_{1,r}^6 + 489348\phi_{1,r}^5 + 303003\phi_{1,r}^4 + 
71280\phi_{1,r}^3 + 9450\phi_{1,r}^2 + 972\phi_{1,r} + 27)/1399680\phi_{1,r}^6$ \\
        $1 (r=5)$ & $(61600\phi_{1,r}^6 + 90684\phi_{1,r}^5 + 
44709\phi_{1,r}^4 + 18000\phi_{1,r}^3 + 2550\phi_{1,r}^2 + 180\phi_{1,r} + 
5)/259200\phi_{1,r}^6$ \\   
        $1 (r>5)$ & $(12320\phi_{1,r}^6 + 22284\phi_{1,r}^5 + 
13089\phi_{1,r}^4 + 3600\phi_{1,r}^3 + 510\phi_{1,r}^2 + 36\phi_{1,r} + 
1)/51840\phi_{1,r}^6$ \\ 
$2 (r=3)$ & $(3337\phi_{2,r}^4 + 3816\phi_{2,r}^3 + 1326\phi_{2,r}^2 + 216\phi_{2,r}
+ 9)/10368\phi_{2,r}^4$ \\  
        $2 (r>3)$ & $(385\phi_{2,r}^4 + 552\phi_{2,r}^3 + 190\phi_{2,r}^2 + 
24\phi_{2,r} + 1)/1152\phi_{2,r}^4$ \\ 
$3$ & $(440\phi_{3,r}^3 + 183\phi_{3,r}^2 + 24\phi_{3,r} + 1)/ 648\phi_{3,r}^3$ \\ 
$4$ & $(77\phi_{4,r}^2 + 18\phi_{4,r} + 1)/ 96\phi_{4,r}^2$ \\ 
$5$ & $(4\phi_{5,r} + 1)/ 5\phi_{5,r}$ \\ 
$6$ & $(55\phi_{6,r}^2 + 16\phi_{6,r} + 1)/ 72\phi_{6,r}^2$ \\ 
$8$ & $(7\phi_{8,r} + 1)/ 8\phi_{8,r}$ \\ 
$9$ & $(8\phi_{9,r} + 1)/ 9\phi_{9,r}$ \\ 
$12$ & $(11\phi_{12,r} + 1)/ 12\phi_{12,r}$ \\ 
\hline
\end{tabular}
\caption{Proportions of $r$-regular elements in $G=E_6(q)$ where $q$ has multiplicative order $e$ modulo $r$ ($|Z(G)| = (3,q-1)$)}
\label{E6}
\end{table}
\end{landscape}

\begin{landscape}
 \begin{table}  
\begin{tabular}{ll}
\hline
$e$ & $|Q|/| {^2}E_6(q)|$ \\
\hline
$1, r=2, q \equiv 1 \pmod{4}$ & $(17557\phi_{1,r}^4 + 12024\phi_{1,r}^3 + 3928\phi_{1,r}^2 + 288\phi_{1,r} + 16)/73728\phi_{1,r}^4$ \\ 
$1, r=2 ,q \equiv 3 \pmod{4}$ & $(179840\phi_{2,r}^6 + 131292\phi_{2,r}^5 + 113709\phi_{2,r}^4 + 19080\phi_{2,r}^3 + 4920\phi_{2,r}^2 + 288\phi_{2,r} + 16)/829440\phi_{2,r}^6$ \\ 
$1 (r=3)$ & $(3337\phi_{1,r}^4 + 3816\phi_{1,r}^3 + 1326\phi_{1,r}^2 + 216\phi_{1,r} + 9)/(10368\phi_{1,r}^4)$ \\ 
$1 (r >3) $ & $(385\phi_{1,r}^4 + 552\phi_{1,r}^3 + 190\phi_{1,r}^2 + 24\phi_{1,r} + 1)/1152\phi_{1,r}^4$ \\ 
$2 (r=3)$ & $(110240\phi_{2,r}^6 + 489348\phi_{2,r}^5 + 303003\phi_{2,r}^4 + 71280\phi_{2,r}^3 + 9450\phi_{2,r}^2 + 972\phi_{2,r} + 27)/(1399680\phi_{2,r}^6)$ \\ 
$2 (r =5)$ & $(61600\phi_{2,r}^6 + 90684\phi_{2,r}^5 + 44709\phi_{2,r}^4 + 18000\phi_{2,r}^3 + 2550\phi_{2,r}^2 + 180\phi_{2,r} + 5)/259200\phi_{2,r}^6$ \\
$2 (r  > 5 ) $ & $(12320\phi_{2,r}^6 + 22284\phi_{2,r}^5 + 13089\phi_{2,r}^4 + 3600\phi_{2,r}^3 + 510\phi_{2,r}^2 + 36\phi_{2,r} + 1)/51840\phi_{2,r}^6$ \\ 
$3$ & $(55\phi_{3,r}^2 + 16\phi_{3,r} + 1)/72\phi_{3,r}^2$ \\ 
$4$ & $(77\phi_{4,r}^2 + 18\phi_{4,r} + 1)/96\phi_{4,r}^2$ \\ 
$6$ & $(440\phi_{6,r}^3 + 183\phi_{6,r}^2 + 24\phi_{6,r} + 1)/648\phi_{6,r}^3$ \\ 
$8$ & $(7\phi_{8,r} + 1)/8\phi_{8,r}$ \\ 
$10$ & $(4\phi_{10,r} + 1)/5\phi_{10,r}$ \\ 
$12$ & $(11\phi_{12,r} + 1)/12\phi_{12,r}$ \\ 
$18$ & $(8\phi_{18,r} + 1)/9\phi_{18,r}$ \\ 
\hline 
\end{tabular} 
\caption{Proportions of $r$-regular elements in  $G={^2}E_6(q)$ where $q$ has multiplicative order $e$ modulo $r$  ($|Z(G)| = (3,q+1)$)} 
\label{2E6}
\end{table}
\end{landscape}

\begin{landscape}
 \begin{table} 
\begin{tabular}{ll}
\hline
$e$ & $|Q|/|E_7(q)|$ \\
\hline
$1$ ($r=2, q \equiv 1 \pmod{4}$) & $(41419665\phi_{1,r}^7 + 95510014\phi_{1,r}^6 + 30219588\phi_{1,r}^5 +
10204152\phi_{1,r}^4 + 952560\phi_{1,r}^3 + 116256\phi_{1,r}^2 + $\\ &$ 
4032\phi_{1,r} + 
128)/371589120\phi_{1,r}^7$ \\ 
$1$ ($r=2, q \equiv 3 \pmod{4}$) & $(41419665\phi_{2,r}^7 + 95510014\phi_{2,r}^6 + 30219588\phi_{2,r}^5 +
10204152\phi_{2,r}^4 + 952560\phi_{2,r}^3 + 116256\phi_{2,r}^2 +$\\ &$
 4032\phi_{2,r} + 
128)/371589120\phi_{2,r}^7$ \\ 
$1 (r=3)$ & $(20191815\phi_{1,r}^7 + 23513057\phi_{1,r}^6 + 
12786039\phi_{1,r}^5 + 3532473\phi_{1,r}^4 + 405405\phi_{1,r}^3 +  $\\ &$ 31563\phi_{1,r}^2 
+ 1701\phi_{1,r} + 27)/78382080\phi_{1,r}^7$ \\  
 $1 (r=5)$ & $(3828825\phi_{1,r}^7 + 6047743\phi_{1,r}^6 + 
2739177\phi_{1,r}^5 + 621159\phi_{1,r}^4 + 108675\phi_{1,r}^3  $\\ &$
+ 8085\phi_{1,r}^2 + 
315\phi_{1,r} + 5)/14515200\phi_{1,r}^7$ \\   
$1 (r=7)$ & $(5360355\phi_{1,r}^7 + 7764581\phi_{1,r}^6 + 
4647699\phi_{1,r}^5 + 1140573\phi_{1,r}^4 + $\\ &$
 152145\phi_{1,r}^3 + 11319\phi_{1,r}^2 +
441\phi_{1,r} + 7)/20321280\phi_{1,r}^7$ \\ 
        $1 (r>7)$ & $(765765\phi_{1,r}^7 + 1286963\phi_{1,r}^6 + 
663957\phi_{1,r}^5 + 162939\phi_{1,r}^4 + 21735\phi_{1,r}^3 + $\\ &$
 1617\phi_{1,r}^2 + 
63\phi_{1,r} + 1)/2903040\phi_{1,r}^7$ \\ 
$2 (r=3)$ & $(20191815\phi_{2,r}^7 + 23513057\phi_{2,r}^6 + 
12786039\phi_{2,r}^5 + 3532473\phi_{2,r}^4 + 405405\phi_{2,r}^3 + $\\ &$ 31563\phi_{2,r}^2 
+ 1701\phi_{2,r} + 27)/78382080\phi_{2,r}^7$ \\  
        $2 (r=5)$ & $(3828825\phi_{2,r}^7 + 6047743\phi_{2,r}^6 + 
2739177\phi_{2,r}^5 + 621159\phi_{2,r}^4 + 108675\phi_{2,r}^3 +$\\ &$ 8085\phi_{2,r}^2 + 
315\phi_{2,r} + 5)/14515200\phi_{2,r}^7$ \\   
        $2 (r=7)$ & $(5360355\phi_{2,r}^7 + 7764581\phi_{2,r}^6 + 
4647699\phi_{2,r}^5 + 1140573\phi_{2,r}^4 + 152145\phi_{2,r}^3 + $\\ &$ 11319\phi_{2,r}^2 +
441\phi_{2,r} + 7)/20321280\phi_{2,r}^7$ \\ 
        $2 (r>7)$ & $(765765\phi_{2,r}^7 + 1286963\phi_{2,r}^6 + 
663957\phi_{2,r}^5 + 162939\phi_{2,r}^4 + 21735\phi_{2,r}^3 + $\\ &$ 1617\phi_{2,r}^2 + 
63\phi_{2,r} + 1)/2903040\phi_{2,r}^7$ \\
$3$ & $(935\phi_{3,r}^3 + 327\phi_{3,r}^2 + 33\phi_{3,r} + 1)/ 1296\phi_{3,r}^3$ \\
$4$ & $(77\phi_{4,r}^2 + 18\phi_{4,r} + 1)/ 96\phi_{4,r}^2$ \\ 
$5$ & $(9\phi_{5,r} + 1)/ 10\phi_{5,r}$ \\ 
$6$ & $(935\phi_{6,r}^3 + 327\phi_{6,r}^2 + 33\phi_{6,r} + 1)/ 1296\phi_{6,r}^3$ \\
$7$ & $(13\phi_{7,r} + 1)/ 14\phi_{7,r}$ \\ 
$8$ & $(7\phi_{8,r} + 1)/ 8\phi_{8,r}$ \\ 
$9$ & $(17\phi_{9,r} + 1)/ 18\phi_{9,r}$ \\ 
$10$ & $(9\phi_{10,r} + 1)/ 10\phi_{10,r}$ \\ 
$12$ & $(11\phi_{12,r} + 1)/ 12\phi_{12,r}$ \\ 
$14$ & $(13\phi_{14,r} + 1)/ 14\phi_{14,r}$ \\ 
$18$ & $(17\phi_{18,r} + 1)/ 18\phi_{18,r}$ \\ 
\hline
\end{tabular}
\caption{Proportions of $r$-regular elements in $G=E_7(q)$ where $q$ has multiplicative order $e$ modulo $r$ ($|Z(G)| = (2,q-1)$)}
\label{E7}
\end{table}
\end{landscape}

\begin{small}
 \begin{landscape}
 \begin{table} 
 \begin{tabular}{ll}
\hline
$e$ & $|Q|/|E_8(q)|$ \\
\hline
$1$ ($r=2, q \equiv 1 \pmod{4}$) & $(41492380321\phi_{1,r}^8 + 27525566640\phi_{1,r}^7 + 
16480551440\phi_{1,r}^6 + 2132907840\phi_{1,r}^5 + 295921248\phi_{1,r}^4 + $\\ &$
14434560\phi_{1,r}^3 + 815360\phi_{1,r}^2 + 15360\phi_{1,r} + 
256)/178362777600\phi_{1,r}^8$ \\ 
$1$ ($r=2, q \equiv 3 \pmod{4}$) & $(41492380321\phi_{2,r}^8 + 27525566640\phi_{2,r}^7 + 
16480551440\phi_{2,r}^6 + 2132907840\phi_{2,r}^5 + 295921248\phi_{2,r}^4 + $\\ &$
14434560\phi_{2,r}^3 + 815360\phi_{2,r}^2 + 15360\phi_{2,r} + 
256)/178362777600\phi_{2,r}^8$ \\
$1 (r=3)$ & $(16277566921\phi_{1,r}^8 + 21789381960\phi_{1,r}^7 + 
7567769940\phi_{1,r}^6 + 1361503080\phi_{1,r}^5 + 159928398\phi_{1,r}^4 + $\\ &$
8913240\phi_{1,r}^3 + 366660\phi_{1,r}^2 + 9720\phi_{1,r} + 
81)/56435097600\phi_{1,r}^8$ \\  
        $1 (r=5)$ & $(5363541841\phi_{1,r}^8 + 7507077000\phi_{1,r}^7 
+ 2845718900\phi_{1,r}^6 + 501215400\phi_{1,r}^5 + 53801790\phi_{1,r}^4 + $\\ &$
4095000\phi_{1,r}^3 + 150500\phi_{1,r}^2 + 3000\phi_{1,r} + 
25)/17418240000\phi_{1,r}^8$ \\   
        $1 (r=7)$ & $(1509595087\phi_{1,r}^8 + 2115252600\phi_{1,r}^7 
+ 761301260\phi_{1,r}^6 + 172854360\phi_{1,r}^5 + 18315906\phi_{1,r}^4 + $\\ &$ 
1146600\phi_{1,r}^3 + 42140\phi_{1,r}^2 + 840\phi_{1,r} + 
7)/4877107200\phi_{1,r}^8$ \\ 
        $1 (r>7)$ & $(215656441\phi_{1,r}^8 + 323507400\phi_{1,r}^7 + 
130085780\phi_{1,r}^6 + 24693480\phi_{1,r}^5 + 2616558\phi_{1,r}^4 + $\\ &$
163800\phi_{1,r}^3 + 6020\phi_{1,r}^2 + 120\phi_{1,r} + 1)/696729600\phi_{1,r}^8$ 
\\ 
$2 (r=3)$ & $(16277566921\phi_{2,r}^8 + 21789381960\phi_{2,r}^7 + 
7567769940\phi_{2,r}^6 + 1361503080\phi_{2,r}^5 + 159928398\phi_{2,r}^4 +  $\\ &$
8913240\phi_{2,r}^3 + 366660\phi_{2,r}^2 + 9720\phi_{2,r} + 
81)/56435097600\phi_{2,r}^8$ \\  
        $2 (r=5)$ & $(5363541841\phi_{2,r}^8 + 7507077000\phi_{2,r}^7 
+ 2845718900\phi_{2,r}^6 + 501215400\phi_{2,r}^5 + 53801790\phi_{2,r}^4 + $\\ &$
4095000\phi_{2,r}^3 + 150500\phi_{2,r}^2 + 3000\phi_{2,r} + 
25)/17418240000\phi_{2,r}^8$ \\   
        $2 (r=7)$ & $(1509595087\phi_{2,r}^8 + 2115252600\phi_{2,r}^7 
+ 761301260\phi_{2,r}^6 + 172854360\phi_{2,r}^5 + 18315906\phi_{2,r}^4 + $\\ &$
1146600\phi_{2,r}^3 + 42140\phi_{2,r}^2 + 840\phi_{2,r} + 
7)/4877107200\phi_{2,r}^8$ \\ 
        $2 (r>7)$ & $(215656441\phi_{2,r}^8 + 323507400\phi_{2,r}^7 + 
130085780\phi_{2,r}^6 + 24693480\phi_{2,r}^5 + 2616558\phi_{2,r}^4 + $\\ &$
163800\phi_{2,r}^3 + 6020\phi_{2,r}^2 + 120\phi_{2,r} + 1)/696729600\phi_{2,r}^8$ 
\\ 
$3$ & $(124729\phi_{3,r}^4 + 28400\phi_{3,r}^3 + 2310\phi_{3,r}^2 + 80\phi_{3,r} + 
1)/ 155520\phi_{3,r}^4$ \\ 
$4$ & $(31345\phi_{4,r}^4(5,r) + 2304\phi_{4,r}^4 + 11100\phi_{4,r}^3(5,r) + 
1270\phi_{4,r}^2(5,r) + 60\phi_{4,r}(5,r) + (5,r))/ 46080\phi_{4,r}^4(5,r)$ \\ 
$5$ & $(551\phi_{5,r}^2 + 48\phi_{5,r} + 1)/ 600\phi_{5,r}^2$ \\ 
$6$ & $(124729\phi_{6,r}^4 + 28400\phi_{6,r}^3 + 2310\phi_{6,r}^2 + 80\phi_{6,r} + 
1)/ 155520\phi_{6,r}^4$ \\ 
$7$ & $(13\phi_{7,r} + 1)/ 14\phi_{7,r}$ \\ 
$8$ & $(161\phi_{8,r}^2 + 30\phi_{8,r} + 1)/ 192\phi_{8,r}^2$ \\ 
$9$ & $(17\phi_{9,r} + 1)/ 18\phi_{9,r}$ \\ 
$10$ & $(551\phi_{10,r}^2 + 48\phi_{10,r} + 1)/ 600\phi_{10,r}^2$ \\ 
$12$ & $(253\phi_{12,r}^2 + 34\phi_{12,r} + 1)/ 288\phi_{12,r}^2$ \\
$14$ & $(13\phi_{14,r} + 1)/14\phi_{14,r}$ \\ 
$15$ & $(29\phi_{15,r} + 1)/ 30\phi_{15,r}$ \\ 
$18$ & $(17\phi_{18,r} + 1)/ 18\phi_{18,r}$ \\ 
$20$ & $(19\phi_{20,r} + 1)/ 20\phi_{20,r}$ \\ 
$24$ & $(23\phi_{24,r} + 1)/ 24\phi_{24,r}$ \\ 
$30$ & $(29\phi_{30,r} + 1)/ 30\phi_{30,r}$ \\
\hline
\end{tabular}
\caption{Proportions of $r$-regular elements in $G=E_8(q)$ where $q$ has multiplicative order $e$ modulo $r$ ($|Z(G)| = 1$)}
\label{E8}
\end{table}
\end{landscape}
\end{small}

%

\begin{table} 
\begin{tabular}{ll}
\hline
 $e$ & $|Q|/|{^2}G_2(q)|$ \\
 \hline
 $1$ ($r=2$) & $7/12+ 1/6 (q+1)_2$\\
 $1$ ($r>3$) & $1/2+1/2\phi_{1,r}$\\
 $2$ & $5/6 + 1/6\phi_{2,r}$ \\
 $6$ & $5/6 + 1/6\phi_{6,r}$\\
 \hline
\end{tabular} 
\caption{Proportions of $r$-regular elements in $G={^2}G_2(q)$ where $q$ has multiplicative order $e$ modulo $r$  ($|Z(G)| = 1$)}
\label{2G2}
\end{table}


\end{document}